\def\Z{{\mathbb Z}}
\definecolor{ForestGreen}{RGB}{34,139,34}
\newtheorem{theorem}{Theorem}[section]
\newtheorem{lemma}[theorem]{Lemma}
\newtheorem{proposition}[theorem]{Proposition}
\newtheorem{cor}[theorem]{Corollary}
\newtheorem*{slemma}{Lemma}
\theoremstyle{definition}
\newtheorem{defi}[theorem]{Definition}
\newtheorem{ex}[theorem]{Example}
\newtheorem{rem}[theorem]{Remark}
\def\F{\mathcal{F}}
\author{Juan Pablo Maldonado}
\address{Departmento de Matematicas - CEMIM - Universidad Nacional de Mar del Plata
\\
Deán Funes 3350, B7602AYL Mar del Plata, Provincia de Buenos Aires, Argentina}
\thanks{The author was supported by CONICET PhD. grant at the Universidad Nacional de Mar del Plata and by the School of Mathematics at the University of Leeds.}
\email{jpemaldonado@mdp.edu.ar}
\title{Frieze matrices and  friezes with coefficients}
\begin{document}

\maketitle

\begin{abstract}
Frieze patterns are combinatorial objects that are deeply related to cluster theory. Determinants of frieze patterns arise from triangular regions of the frieze, and they have been considered in \cite{BroCI, BM}. In this article, we introduce a new type of matrix for any infinite frieze pattern. This approach allows us to give a new proof of the frieze determinant result given by Baur-Marsh.

\end{abstract} \vspace{3mm}

\section{Introduction}

A frieze pattern is an arrangement of numbers that classically starts with a row of zeros followed by a row of ones and ends with a row of ones followed by a row of zeros, and such that every diamond formed by neighbouring entries satisfies the so-called ``diamond rule''.  These arrangements were introduced by Coxeter in \cite{C} and studied by Conway and Coxeter in \cite{CC1,CC2}. Lately, friezes have been actively studied in connection to cluster theory, in such a way that the entries of the frieze are interpreted as the cluster variable of a cluster algebra of type $A$. In this setting the notion of a frieze pattern can be generalized, in particular to infinite friezes (as in \cite{BPM}) or friezes with coefficients (as in \cite{CHJ}). 

The study of symmetric matrices arising from finite frieze patterns was firstly developed in \cite{BroCI}. The main result is a formula for the determinant of a symmetric matrix whose entries form a fundamental region of a finite frieze pattern of positive integers. See Corollary \ref{Cor:Theorem Conway-Coxeter} for details. Afterwards, Baur and Marsh proposed in \cite{BM} a new interpretation drawing upon the cluster algebra setting, and considering a symmetric matrix whose lower part is a fundamental region of a finite frieze pattern with coefficients.  In \cite{B} the author asks for an analogous formula for the determinant of a matrix whose entries are cluster variables of a cluster algebra of type $D$, which was provided by Lampe in {\cite[Theorem 3.6]{Lam}}.

 In this work we provide a new proof of {\cite[Theorem 2.1]{BM}} using a different approach, dropping the use of triangulations. 
 For a symmetric matrix $M$, our main strategy to prove Theorem \ref{Theo: Determinant} is to study an upper triangular matrix $T_M$ which is equivalent to $M$, and reduce the computation of $det(M)$ to that of the determinant of $T_M$.

After our preprint has been posted, work of Holm and J\o rgensen has appeared which includes a more general result, implying Baur-Marsh's frieze determinant, 
see~\cite[Section 4.3]{HJ}.

The structure of this paper is as follows. In Section \ref{SecTh} we  define frieze matrices and we enunciate the main results, giving the proof of our main result,  Theorem \ref{Theo: Determinant}. We finish this section showing two identities fulfilled by the entries of the matrices that we study. Some results of Section  \ref{SecTh} are left to be proved in Appendix \ref{SecProof} in order to ease the reading; thence, Appendix \ref{SecProof} is a section primarily intended to contain demonstrations left in Section \ref{SecTh}, together with some lemmas needed for this purpose. The reader is warned that in some proofs of Section \ref{SecTh} the author may use results from Appendix \ref{SecProof}. \vspace{3mm}

\section{Frieze matrices}\label{SecTh} \vspace{3mm}

For the rest of this article $R$ will denote an integral domain of characteristic zero and $n$ will be a positive integer. \vspace{3mm}

\begin{defi}\label{defi:frieze matrix}  
A symmetric matrix $M = (m_{i,j}) \in frac(R)^{n \times n}$  will be called a \emph{frieze matrix} if  $m_{i,j} = 0$ if and only if $i=j$ and the entries satisfy the \emph{generalized diamond rule} 
\begin{equation}\label{diam rule}
m_{i,j}m_{i+1,j+1} - m_{i+1,j}m_{i,j+1} = m_{i,i+1}m_{j,j+1} 
\end{equation}

 for all $1 \leq i \leq n-1$ and $2 \leq i+1 \leq j \leq n-1$.
\end{defi}Note that Equation \ref{diam rule} says nothing about entries $m_{i,i+1}$ and $m_{i,i+2}$. It will be useful to denote them as $x_{i}$  and $y_{i}$ respectively. $M$ is fully determined by these entries and the repeated application of the generalized diamond rule. \vspace{3mm}

In the literature the relation 
$m_{i,k}m_{j,l} = m_{i,j}m_{k,l} + m_{i,l}m_{j,k}$ for $i \leq j \leq k \leq l$ is called \emph{Ptolemy relation} \cite{P, Pen} or \emph{Plücker relation} \cite{BM}. We will denote this equation as $E_{i,j,k,l}$.

With this notation, in Definition \ref{defi:frieze matrix} we ask the entries of $M$ to fulfill the equation $E_{i,i+1,j,j+1}$ for every pair of indices $(i,j)$ such that $2 \leq i+1 \leq j \leq n-1$. In the next lemma  we see that this is enough to ensure that the entries of $M$ indeed fulfill the Ptolemy relation $E_{i,j,k,l}$ for all quadruples of indices $1 \leq i \leq j \leq k \leq l \leq n$. \vspace{3mm}
 
For completeness we include a proof of the following lemma in Section \ref{SecProof}. Note that this property also appears in the context of finite friezes in \cite[Theorem 3.3]{CHJ}. 
 
\vskip.5cm

\begin{lemma}\label{lm:ptolemy} 
Let $M= (m_{i,j})$ be a frieze matrix in $frac(R)^{n \times n}$. Then \begin{equation*}
m_{i,k}m_{j,l} = m_{i,j}m_{k,l} + m_{i,l}m_{j,k} 
\end{equation*}
for every $1 \leq i \leq j \leq k \leq l \leq n$. 
\end{lemma}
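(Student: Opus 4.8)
The plan is to establish the Ptolemy relation $E_{i,j,k,l}$ for every admissible quadruple by induction, treating the generalized diamond rule---that is, the relations $E_{i,i+1,j,j+1}$ in which both outer gaps equal $1$---as the atomic identities that are available throughout. First I would clear away the degenerate configurations in which two adjacent indices among $i\le j\le k\le l$ coincide: each such coincidence inserts a vanishing diagonal entry $m_{p,p}=0$ and collapses the relation to a triviality, so it suffices to treat the strict case $i<j<k<l$. These remaining quadruples I would order lexicographically by $(l-i,\,j-i)$, the span followed by the first gap, and induct on this order.

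The inductive step splits into two eliminations. If $j-i\ge 2$, I would remove the factor $m_{i,i+1}$ from the three relations $E_{i,i+1,j,k}$, $E_{i,i+1,j,l}$ and $E_{i,i+1,k,l}$: multiplying $E_{i,i+1,j,l}$ by $m_{i,k}$, multiplying $E_{i,i+1,j,k}$ by $m_{i,l}$, subtracting, and then substituting $E_{i,i+1,k,l}$, the cross terms cancel and one is left with $m_{i,i+1}$ as a common factor on both sides; cancelling it (legitimate in $frac(R)$, since every off-diagonal entry is nonzero) produces exactly $E_{i,j,k,l}$. If instead $j-i=1$ while $l-k\ge 2$, I would carry out the symmetric elimination from the right: from $E_{i,i+1,k,l-1}$, the diamond rule $E_{i,i+1,l-1,l}$ and $E_{i+1,k,l-1,l}$ one obtains, after multiplying through by $m_{i+1,l-1}$ and invoking the three relations in turn, the identity $m_{i+1,l-1}\bigl(m_{i,k}m_{i+1,l}-m_{i,l}m_{i+1,k}\bigr)=m_{i,i+1}\,m_{i+1,l-1}\,m_{k,l}$, and cancelling the nonzero factor $m_{i+1,l-1}$ gives $E_{i,i+1,k,l}$. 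The case $j-i=1$, $l-k=1$ is the diamond rule itself.

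What makes the recursion well founded is the accounting of the auxiliary relations against the order $(l-i,\,j-i)$. In the first elimination, $E_{i,i+1,j,k}$ has strictly smaller span $k-i<l-i$, whereas $E_{i,i+1,j,l}$ and $E_{i,i+1,k,l}$ keep the span $l-i$ but have first gap $1<j-i$, so all three are lexicographically smaller. In the second elimination, $E_{i,i+1,k,l-1}$ and $E_{i+1,k,l-1,l}$ both have span $l-i-1$, and $E_{i,i+1,l-1,l}$ is a diamond rule. Hence every relation called upon is either a base case or strictly earlier in the order, and the induction closes.

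I expect the main obstacle to be exactly this bookkeeping: a single induction on the span $l-i$ alone fails, because the first elimination appeals to the equal-span relations $E_{i,i+1,j,l}$ and $E_{i,i+1,k,l}$, and it is only the secondary refinement by the first gap $j-i$ that certifies them as smaller. The one remaining point to verify is that the cancelled factors $m_{i,i+1}$ and $m_{i+1,l-1}$ are genuinely nonzero, which is immediate since $m_{p,q}=0$ precisely when $p=q$ and in each case the two indices are distinct.
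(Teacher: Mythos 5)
Your proof is correct, and it takes a genuinely different route from the paper's. The paper also dismisses the degenerate quadruples as trivial and then runs a single induction on the span $d=l-i$ alone: in the inductive step it expands $m_{i,l}$ via the diamond instance $E_{i,i+1,l-1,l}$ and pushes the resulting expression through the four relations $E_{i,j,k,l-1}$, $E_{i+1,k,l-1,l}$, $E_{i,i+1,j,k}$ and $E_{i+1,j,l-1,l}$ --- all of span at most $d$ when the target has span $d+1$ --- arriving at $E_{i,j,k,l}$ in one pass for arbitrary interior indices $j,k$, with base case the consecutive quadruple $l-i=3$. You instead refine the well-ordering to the lexicographic pair $(l-i,\,j-i)$ and split the reduction into two stages: a left elimination that lowers the first gap to $1$ (reducing $E_{i,j,k,l}$ to relations of the form $E_{i,i+1,\ast,\ast}$ at the cost of two equal-span auxiliaries), and a right elimination that shrinks the span of the first-gap-one relations down to the diamond axioms. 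I checked both of your eliminations and the cancelled factors $m_{i,i+1}$ and $m_{i+1,l-1}$ are indeed nonzero off-diagonal entries, so the divisions in $frac(R)$ are legitimate; the bookkeeping against the lexicographic order also closes as you describe. Your two-stage scheme is arguably more transparent about which auxiliary relations are consumed and why the recursion terminates, while the paper's version buys a simpler induction parameter. One small caveat on your closing remark: induction on $l-i$ alone does not fail in general --- the paper's chain of substitutions shows it can be made to work --- it only fails for your particular first elimination, which genuinely needs the secondary index $j-i$.
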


The main feature that we will use to compute the determinant of $M$ is a triangulated form. For this, denote by $T_M  \in frac(R)^{n \times n}$ the upper triangular matrix whose entries are given by:

\[
t_{i,j} = \left\{ \begin{array}{l c l}
	m_{2,j} & & \text{if} \ i=1 \\[0.3em]
	m_{1,j} & & \text{if} \ i=2 \\[0.4em]
	0 & & \text{if} \ i\geq3 \ \land \ j < i \\[0.3em]
	\dfrac{-2m_{1,j}}{m_{1,i-1}}m_{i-1,i} & & \text{if} \ i\geq3 \ \land \ j \geq i
\end{array}\right.
\] \vspace{2mm}

\begin{proposition}\label{Prop:Triangulated form} 
If $M = (m_{i,j}) \in frac(R)^{n \times n}$ is a frieze matrix, then it is row equivalent to the upper triangular matrix T$_{M}$ defined above, and $det(T_M) = -det(M)$.
\end{proposition}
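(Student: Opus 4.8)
The plan is to exhibit an explicit sequence of elementary row operations carrying $M$ to $T_M$ and to record how each one scales the determinant. The first two rows of $T_M$ are, by construction, the second and first rows of $M$; so I would begin by transposing rows $1$ and $2$ of $M$. This single swap produces the top two rows of $T_M$ and multiplies the determinant by $-1$. Every subsequent operation will be of the form ``add a scalar multiple of one row to another,'' which preserves the determinant, so that the relation $\det(T_M)=-\det(M)$ falls out automatically once the row equivalence is established.

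With the first two rows in place, I would clear rows $i=3,\dots,n$ in increasing order, at each stage using rows $1,\dots,i-1$ already brought to their $T_M$-form. The diagonal entries $t_{1,1}=t_{2,2}=m_{1,2}$ and $t_{k,k}=\tfrac{-2m_{1,k}}{m_{1,k-1}}m_{k-1,k}$ are all nonzero, since by Definition \ref{defi:frieze matrix} every off-diagonal entry of $M$ is nonzero; hence rows $1,\dots,i-1$ of $T_M$ have their pivots in columns $1,\dots,i-1$, and there is a unique choice of multipliers annihilating the first $i-1$ entries of row $i$. Performing these transvections is the routine part of the argument and does not change the determinant.

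The heart of the proof---and the step I expect to be the main obstacle---is to check that after this elimination the surviving entries of row $i$, namely those in columns $j\ge i$, are exactly $t_{i,j}=\tfrac{-2m_{1,j}}{m_{1,i-1}}m_{i-1,i}$. I would establish this by induction on $i$, using that the rows employed to clear row $i$ already have the prescribed form and invoking the Ptolemy relation $E_{i,j,k,l}$ of Lemma \ref{lm:ptolemy} at each cancellation. The case $i=3$ already displays the mechanism: one subtracts $\tfrac{m_{1,3}}{m_{1,2}}$ times the first row and $\tfrac{m_{2,3}}{m_{1,2}}$ times the second row, and the two symmetric contributions to column $3$ combine to $\tfrac{-2m_{1,3}m_{2,3}}{m_{1,2}}$, which is the source of the factor $-2$; the identity needed to match the entry in a column $j>3$ is precisely $E_{1,2,3,j}$. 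In the general step the factors $-2$ produced at earlier stages telescope against the ratios $m_{1,k}/m_{1,k-1}$ sitting in the pivots, and each simplification is again an instance of the relation in Lemma \ref{lm:ptolemy}; I anticipate that the bookkeeping of which multiple of which pivot row clears which column is the only genuinely delicate point.

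Once the identification of $T_M$ with the image of $M$ under these operations is complete, the determinant statement is immediate: the transposition of the first two rows is the only determinant-altering step, so $\det(T_M)=-\det(M)$. As a by-product, multiplying out the diagonal of the triangular matrix $T_M$ and telescoping $\prod_{i=3}^{n} m_{1,i}/m_{1,i-1}=m_{1,n}/m_{1,2}$ yields the closed form that presumably underlies Theorem \ref{Theo: Determinant}.
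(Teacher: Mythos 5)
Your proposal is correct and follows essentially the same route as the paper: a swap of the first two rows (accounting for the factor $-1$) followed by determinant-preserving transvections, with the entries of the resulting triangular matrix identified by an induction that repeatedly invokes the Ptolemy relations of Lemma~\ref{lm:ptolemy}. The paper organizes the elimination column-by-column through auxiliary matrices $M_0,\dots,M_{n-1}$ and isolates the inductive entry computation as Lemma~\ref{Lemma:Mk}, whereas you proceed row-by-row; the operations and identities are the same, and the only part you leave as a sketch is precisely that inductive verification.
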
 \vspace{3mm}

We will prove Proposition \ref{Prop:Triangulated form} in Section \ref{SecProof}.

\begin{ex}\label{exM}
For the frieze matrix $$
M= \left( \begin{array}{c c c c c c}
0 & 1 & 2 & 2 & -1 & 5 -\frac{\sqrt{5}}{2}\\[0.5ex]
1 & 0 & -2 & 1 & \frac{1}{2} & \frac{-7}{2}+\frac{\sqrt{5}}{4}\\[0.5ex]
2 & -2 & 0 & 6 & -1 & 3 - \frac{\sqrt{5}}{2}\\[0.5ex]
2 & 1 & 6 & 0 & 2 & \sqrt{5} \\[0.5ex]
-1 & \frac{1}{2}  & -1 & 2 & 0 & 1 \\[0.5ex]
5 -\frac{\sqrt{5}}{2} & \frac{-7}{2}+\frac{\sqrt{5}}{4} & 3 - \frac{\sqrt{5}}{2} & \sqrt{5} & 1  & 0 \\[0.5ex]
\end{array}\right)$$ we have that

\[T_M = \left( \begin{array}{c c c c c c}
 1 & 0 & -2 & 1 & \frac{1}{2} & \frac{-7}{2}+\frac{\sqrt{5}}{4} \\[0.5em]
 0 & 1 & 2 & 2 & -1 & 5 -\frac{\sqrt{5}}{2}\\[0.5em]
 0 & 0 & 8 & 8 & -2 & 20-2\sqrt{5} \\[0.5em]
 0& 0 & 0 & -12 & 4 & 3\sqrt{5} -18 \\[0.5em]
 0& 0 & 0 & 0 & \frac{11}{6} & \sqrt{5} -6 \\[0.5em]
 0& 0 & 0 & 0 & 0 & \frac{17}{11}\sqrt{5}-\frac{173}{22} \\
 
\end{array}\right)\]
\end{ex} \vspace{6mm}

The main result of this section follows directly from Proposition \ref{Prop:Triangulated form} as the determinant of $M$ can computed  using the upper triangular matrix $T_M$. 
\vspace{-1mm}
	
\begin{theorem}\label{Theo: Determinant} If $M$ is a frieze matrix then 
$Det(M) = -(-2)^{n-2}m_{1,n}\prod\limits_{i=1}^{n-1}x_{i}$ 
\end{theorem}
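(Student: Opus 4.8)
The plan is to read the formula off Proposition \ref{Prop:Triangulated form} with essentially no further structural input. That proposition gives $\det(M) = -\det(T_M)$, and since $T_M$ is upper triangular its determinant is the product of the diagonal entries. So it suffices to compute $\prod_{i=1}^{n} t_{i,i}$ and check that it collapses, by a telescoping argument, to $(-2)^{n-2}m_{1,n}\prod_{i=1}^{n-1}x_i$.

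First I would extract the diagonal entries from the four-case definition of $T_M$. The first two come from the top two cases: since $M$ is symmetric, $t_{1,1}=m_{2,1}=m_{1,2}=x_1$ and $t_{2,2}=m_{1,2}=x_1$. For $i\geq 3$ the fourth case gives, after rewriting $m_{i-1,i}=x_{i-1}$,
\[
t_{i,i}=\frac{-2\,m_{1,i}}{m_{1,i-1}}\,m_{i-1,i}=\frac{-2\,m_{1,i}}{m_{1,i-1}}\,x_{i-1}.
\]

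Next I would multiply these together and separate the three kinds of factors:
\[
\prod_{i=1}^n t_{i,i}=x_1^2\prod_{i=3}^n\frac{-2\,m_{1,i}}{m_{1,i-1}}\,x_{i-1}=x_1^2\,(-2)^{n-2}\left(\prod_{i=3}^n\frac{m_{1,i}}{m_{1,i-1}}\right)\left(\prod_{i=3}^n x_{i-1}\right).
\]
The middle product telescopes to $m_{1,n}/m_{1,2}=m_{1,n}/x_1$, while reindexing the last product gives $\prod_{i=3}^n x_{i-1}=x_2x_3\cdots x_{n-1}$. Combining, the factor $x_1^2$ cancels the denominator $x_1$ and supplies the missing leading $x_1$ of the product over the short diagonal, yielding $\det(T_M)=(-2)^{n-2}m_{1,n}\prod_{i=1}^{n-1}x_i$. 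The claimed identity then follows from $\det(M)=-\det(T_M)$.

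Because Proposition \ref{Prop:Triangulated form} already carries all the real content, there is no genuine obstacle here beyond bookkeeping. The only point deserving care is the interplay between the two boundary entries $t_{1,1},t_{2,2}$ (each equal to $x_1$) and the telescoping factor $1/x_1$: this is exactly what turns $x_2\cdots x_{n-1}$ into the full product $\prod_{i=1}^{n-1}x_i$ and explains the exponent $n-2$ (rather than $n-1$) on $-2$. I would also note that the statement tacitly assumes $n\geq 2$, with $n=2$ serving as the base case $\det\left(\begin{smallmatrix}0 & x_1\\ x_1 & 0\end{smallmatrix}\right)=-x_1^2$, consistent with the empty telescoping product.
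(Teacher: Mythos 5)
Your proposal is correct and follows essentially the same route as the paper: read off the diagonal of $T_M$ from Proposition \ref{Prop:Triangulated form}, telescope the product $\prod_{i=3}^{n} m_{1,i}/m_{1,i-1}$ to $m_{1,n}/m_{1,2}$, and cancel against $t_{1,1}t_{2,2}=m_{1,2}^2$. The only (harmless) additions are your explicit remark about the $n=2$ base case and the bookkeeping of the exponent $n-2$, which the paper leaves implicit.
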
 \vspace{0mm}
	
\begin{proof}
As $Det(M) = - Det(T_M)$, and this last determinant can be computed as the product of the entries in the diagonal of $T_M$ we have that \vspace{-3mm} 

\begin{eqnarray*}
&& \hspace{-7mm} Det(M) = - Det(T_{M}) = - \prod\limits_{i=1}^{n}t_{i,i} = - t_{1,1}t_{2,2}\prod\limits_{i=3}^{n}t_{i,i} =-m_{1,2}m_{1,2}\prod\limits_{i=3}^{n}\dfrac{-2m_{1,i}}{m_{1,i-1}}m_{i-1,i}  \\
&=& -(-2)^{n-2}m_{1,2}m_{1,2}\dfrac{m_{1,n}}{m_{1,2}} \prod\limits_{i=3}^{n}m_{i-1,i} = -(-2)^{n-2}m_{1,n} \prod\limits_{i=2}^{n}m_{i-1,i} = -(-2)^{n-2}m_{1,n} \prod\limits_{i=1}^{n-1}x_{i}
\end{eqnarray*}
\end{proof}

Two results that we recover from Theorem~\ref{Theo: Determinant} are stated in Corollary~\ref{Cor:Theorem Conway-Coxeter} and \ref{Cor: Theorem Baur-Marsh}, so we recover \cite[Theorem 4]{BroCI} and  \cite[Theorem 1.1]{BM} respectively. \vspace{3mm}

To give the context of Theorem $4$ in \cite{BroCI} we recall the notion of frieze patterns as first introduced by Conway and Coxeter in \cite{CC1, CC2}. For further details we refer to \cite{B, M-G}. An array of numbers $\F = (f_{i,j})_{i,j \in \Z}$, with $j \geq i$,  is a \emph{frieze pattern} if the following  holds:
\begin{enumerate}[i)]
\itemsep 0cm
\item $f_{i,i}= 0$ \hspace{3mm} for all $i \in \Z$
\item $f_{i,i+1} = 1$ \hspace{3mm} for all $i \in \Z$
\item $f_{i,j}f_{i+1,j+1} - f_{i+1,j}f_{i,j+1} = 1$ \hspace{3mm} for all $i \leq j \in \Z$ 
\end{enumerate}

Usually the entries of $\F$ are displayed in rows, shifted with respect to each other.The frieze $\F$ is  \emph{finite}  if $f_{i, i+k-1} = 1$ for some fixed $k$ and for all $i \in \Z$. The positive integer $k$ is called the \emph{order} $\F$. A frieze $\F$ is a \emph{frieze pattern of positive integers} if all the $f_{i,j}$ out of the rows of zeros are positive integers. The third row of $\F$, whose elements are of the form $f_{i,i+2}$, is called the \emph{quiddity row} of $\F$ and its entries are noted $a_{i} = f_{i,i+2}$. If $\F$ is finite of order $k$ then it is $k-$periodic ($f_{i,j} = f_{i+k,j+k} \hspace{1mm} \forall i,j$), see \cite{CC1, CC2} problem $(21)$. In this case we call a \emph{quiddity sequence} of $\F$ the sequence of numbers $(a_{1}, \ldots, a_{k})$. Finally, a \emph{fundamental region} for a finite integral frieze pattern $\F$ is given by the elements of the form $f_{i,j}$, with $1 \leq i \leq k$ and $i \leq j \leq k$. The main theorem stated in \cite{BroCI} is the following corollary of Theorem \ref{Theo: Determinant}. \vspace{2mm}

\begin{spacing}{1.1}
\begin{cor}{\cite[Theorem 4]{BroCI}}\label{Cor:Theorem Conway-Coxeter} Let $\mathcal{F}$ be a finite integer frieze pattern of order $k$, with quiddity sequence $(a_1, \ldots, a_k)$. Let us define $M_{\F} = (m_{ij}) \in \Z^{k \times k}$ as the symmetric matrix whose lower part is given by the fundamental region of $\F$ (i.e. $m_{i,j} = f_{i,j}$ if $1 \leq i \leq j \leq k$ and $m_{i,j} = m_{j,i}$ if $1 \leq j < i \leq n $) Then Det($M_{\F})=-(-2)^{k-2}$.
\end{cor}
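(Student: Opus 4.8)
The plan is to derive Corollary~\ref{Cor:Theorem Conway-Coxeter} directly from Theorem~\ref{Theo: Determinant} by verifying that the symmetric matrix $M_{\F}$ built from the fundamental region of a finite integer frieze pattern is itself a frieze matrix in the sense of Definition~\ref{defi:frieze matrix}, and then specializing the general determinant formula to the values forced by the frieze axioms. First I would check the hypotheses of the theorem. The diagonal entries $m_{i,i} = f_{i,i}$ vanish by axiom~(i), and off-diagonal entries are nonzero since the entries of $\F$ outside the rows of zeros are positive integers; this gives the condition ``$m_{i,j}=0$ if and only if $i=j$''. The generalized diamond rule~\eqref{diam rule} for $M_{\F}$ reads $m_{i,j}m_{i+1,j+1} - m_{i+1,j}m_{i,j+1} = m_{i,i+1}m_{j,j+1}$, and since axiom~(ii) gives $f_{i,i+1}=1$ for all $i$, the right-hand side equals $m_{i,i+1}m_{j,j+1} = 1 \cdot 1 = 1$, so the condition reduces exactly to the classical diamond rule, axiom~(iii) of the frieze pattern. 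Hence $M_{\F}$ is a frieze matrix with $k$ in place of $n$.

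With $M_{\F}$ identified as a frieze matrix, Theorem~\ref{Theo: Determinant} applies verbatim and gives
\[
\mathrm{Det}(M_{\F}) = -(-2)^{k-2}\, m_{1,k}\prod_{i=1}^{k-1} x_i,
\]
where, in the notation of the paper, $x_i = m_{i,i+1} = f_{i,i+1}$. By axiom~(ii), each factor $x_i = f_{i,i+1} = 1$, so the product $\prod_{i=1}^{k-1} x_i$ collapses to $1$. It remains to evaluate $m_{1,k}$. This is the one genuinely frieze-theoretic input: $m_{1,k} = f_{1,k} = f_{1,1+(k-1)}$, and by the definition of a finite frieze of order $k$ one has $f_{i,i+k-1}=1$ for all $i$, so in particular $m_{1,k}=1$. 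Substituting both specializations into the formula yields $\mathrm{Det}(M_{\F}) = -(-2)^{k-2}$, as claimed.

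The main conceptual point, and the only place requiring care, is the identification $m_{1,k} = 1$. One must confirm that the entry in position $(1,k)$ of the fundamental region is exactly the ``terminal'' frieze entry $f_{1,k}$, i.e. that the fundamental region $\{f_{i,j} : 1\le i\le j\le k\}$ includes the far corner $f_{1,k}=f_{1,1+(k-1)}$, which lies on the top row of ones characterizing order~$k$. This is immediate from the indexing conventions recalled just before the corollary, but it is worth stating explicitly since the entire numerical value of the determinant hinges on $m_{1,k}=1$ together with $x_i=1$; all other structure in Theorem~\ref{Theo: Determinant} survives only through the sign and the power of $-2$. No further computation is needed, as the heavy lifting—the triangulation of $M$ and the product-of-diagonal-entries evaluation—has already been carried out in Proposition~\ref{Prop:Triangulated form} and Theorem~\ref{Theo: Determinant}.
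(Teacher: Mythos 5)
Your proposal is correct and follows essentially the same route as the paper: identify $M_{\F}$ as a frieze matrix with $n=k$ and $x_i=1$, apply Theorem~\ref{Theo: Determinant}, and use the order-$k$ condition to get $m_{1,k}=f_{1,k}=1$. The only difference is that you spell out the verification of the frieze-matrix axioms (nonvanishing of off-diagonal entries, reduction of the generalized diamond rule to the classical one) in slightly more detail than the paper does, which is harmless.
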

\end{spacing} \vspace{3mm}

\begin{proof}
If in Definition \ref{defi:frieze matrix} we set $n=k$, $x_{i} = 1$ for all $i \in [1, \ldots, k-1]$ and $y_{i} = a_{i}$ for all $i \in [1, \ldots, k-2]$ we recover the matrix $M_{\F}$, so we see that $M_{\F}$ is a frieze matrix. Then, by Theorem~\ref{Theo: Determinant}, $Det(M_{\F}) = -(-2)^{k-2}m_{1,k}$, since all the $x_{i}$ are equal to one. Besides, as $\F$ is of order $k$, $m_{1,k} = f_{1,k} = 1$. Therefore, $Det(M_{\F}) = -(-2)^{k-2}$.
\end{proof}

\begin{spacing}{1.3}
Now we proceed to give the proof of \cite[Theorem 1.1]{BM} in terms of our Theorem~\ref{Theo: Determinant}. For this, consider a $2 \times n$ matrix  $X= \left(\begin{array}{c c c c}
a_{1} & a_{2} & \ldots & a_{n} \\
b_{1} & b_{2} & \ldots & b_{n}
\end{array}\right)$ whose entries are indeterminate. Denote by $\Delta_{ij} = \left|\begin{array}{c c}
a_{i} & a_{j} \\
b_{i} & b_{j}
\end{array}\right|$ the minor of $X$ given by the columns $i,j$ and let $A=(A_{ij})$ be the matrix such that $A_{ij} = \left\{\begin{array}{l c l}
\Delta_{ij} & & i \geq j \\
\Delta_{ji} & & i < j
\end{array}
\right.$
\end{spacing}

The authors showed in \cite[Theorem 2.1]{BM} that the entries of $A$, fulfill the Ptolemy relation; in particular this holds for $i < i+1 < i+k-1 < i+k $ (with $k \geq 3$). So we have that $A$ is a frieze matrix, and we can apply Theorem~\ref{Theo: Determinant} to obtain the following immediate corollary of Theorem \ref{Theo: Determinant}.

\begin{cor}{\cite[Theorem 1.1]{BM}}\label{Cor: Theorem Baur-Marsh} $Det(A) = -(-2)^{n-2}\Delta_{1n}\prod\limits_{i=1}^{n-1}\Delta_{i(i+1)}$
\end{cor}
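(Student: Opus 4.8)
The plan is to observe that this statement is, as the author indicates, a specialization of Theorem~\ref{Theo: Determinant}: the genuinely substantial input---that the minors $\Delta_{ij}$ of $X$ obey the Ptolemy relation---is already supplied by \cite[Theorem 2.1]{BM}, so the remaining work is to verify that $A$ meets the hypotheses of Definition~\ref{defi:frieze matrix} and then to substitute the relevant entries into the determinant formula. I would therefore structure the argument as a short verification followed by a direct computation.

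First I would confirm that $A$ is a frieze matrix in $frac(R)^{n\times n}$ (taking $R$ to be a domain over which the $a_k,b_k$ are indeterminates). By construction $A$ is symmetric, and its diagonal entries are $A_{ii}=\Delta_{ii}=0$, while for $i\neq j$ each entry equals, up to the index ordering prescribed in the definition of $A$, a minor $\Delta_{ij}=a_ib_j-a_jb_i$; this is a nonzero element of $frac(R)$ precisely because the entries of $X$ are indeterminate, so $A_{ij}=0$ if and only if $i=j$. Next, invoking \cite[Theorem 2.1]{BM}, the entries of $A$ satisfy the Ptolemy relation $E_{i,j,k,l}$ for all $i\le j\le k\le l$. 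Specializing to the quadruple $(i,i+1,j,j+1)$ with $j\ge i+2$ yields exactly the generalized diamond rule~\eqref{diam rule}, while for $j=i+1$ the rule holds trivially because the diagonal entry that appears vanishes. Hence $A$ is a frieze matrix.

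With this established I would apply Theorem~\ref{Theo: Determinant}, obtaining $Det(A)=-(-2)^{n-2}m_{1,n}\prod_{i=1}^{n-1}x_i$, and then identify the distinguished entries: the super-diagonal entries $x_i=A_{i,i+1}$ and the corner entry $m_{1,n}=A_{1,n}$, which by the definition of $A$ are the minors $\Delta_{i(i+1)}$ and $\Delta_{1n}$ read off in the appropriate index order. I expect the only delicate step to be the sign bookkeeping here: rewriting $A_{i,i+1}$ and $A_{1,n}$ as $\Delta_{i(i+1)}$ and $\Delta_{1n}$ uses the antisymmetry $\Delta_{ji}=-\Delta_{ij}$, which contributes factors of $-1$ that must be reconciled against the prefactor $(-2)^{n-2}$. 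I would track these factors explicitly---and cross-check against the small case $n=3$, where $A$ is a $3\times 3$ symmetric matrix with zero diagonal whose determinant is computed by hand---to confirm that they assemble into precisely the sign displayed in the statement. Everything else follows immediately from Theorem~\ref{Theo: Determinant}.
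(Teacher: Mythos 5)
Your proposal matches the paper's argument exactly: the paper likewise invokes \cite[Theorem 2.1]{BM} for the Ptolemy relations, concludes that $A$ is a frieze matrix, and reads the formula off Theorem~\ref{Theo: Determinant}, treating the corollary as immediate. The sign bookkeeping you flag is the one point the paper passes over in silence --- with the definition of $A$ as written, $A_{i,i+1}=\Delta_{(i+1)i}=-\Delta_{i(i+1)}$, so the substitution into the determinant formula contributes a factor $(-1)^{n}$ that must be reconciled with the index convention for $A$ --- and carrying out that check (e.g.\ against $n=3$) as you propose is a worthwhile refinement rather than a deviation.
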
 \vspace{5mm}

We finish this section with two results giving identities in the entries of the matrices we have studied. The first one provides a formula to compute the entries of a frieze matrix $M$ only knowing the entries in its first two rows and the elements $m_{i,i+1} \in frac(R)$. The second one proves that the entries of the triangular form $T_M$ of a frieze matrix $M$ fulfill an analogous formula of the generalized diamond rule in Equation \ref{diam rule}. \vspace{3mm}

\begin{proposition}\label{Cor:Diagonals}
For $3 \leq i \leq n-1$ and $j \geq i$ it holds that 
\[
m_{i,j} = \frac{m_{1,i}m_{2,j}}{m_{1,2}} + \frac{m_{2,i}m_{1,j}}{m_{1,2}} - 2 \sum\limits_{t=3}^{i}\dfrac{m_{1,i}m_{1,j}}{m_{1,t}m_{1,t-1}}m_{t-1,t}
\]
\end{proposition}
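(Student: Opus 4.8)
The plan is to show that the (seemingly complicated) right-hand side collapses, via a single Ptolemy relation together with one $j$-free summation identity, to the compact expression that Lemma~\ref{lm:ptolemy} produces directly. First I would apply Lemma~\ref{lm:ptolemy} to the admissible quadruple $(1,2,i,j)$ (admissible since $1\le 2\le i\le j$); the relation $E_{1,2,i,j}$ reads $m_{1,i}m_{2,j}=m_{1,2}\,m_{i,j}+m_{1,j}m_{2,i}$, whence
\begin{equation}\label{compact}
m_{i,j}=\frac{m_{1,i}m_{2,j}-m_{1,j}m_{2,i}}{m_{1,2}}.
\end{equation}
Grouping the asserted formula as $m_{i,j}=\frac{m_{1,i}m_{2,j}}{m_{1,2}}+\left(\frac{m_{2,i}}{m_{1,2}}-2m_{1,i}\sum_{t=3}^{i}\frac{m_{t-1,t}}{m_{1,t}m_{1,t-1}}\right)m_{1,j}$ and comparing with \eqref{compact}, I see that the entire proposition is equivalent to the single identity
\begin{equation}\label{aux}
\frac{m_{2,i}}{m_{1,2}}=m_{1,i}\sum_{t=3}^{i}\frac{m_{t-1,t}}{m_{1,t}\,m_{1,t-1}},
\end{equation}
because substituting \eqref{aux} turns the parenthesized coefficient of $m_{1,j}$ into $\frac{m_{2,i}}{m_{1,2}}-2\frac{m_{2,i}}{m_{1,2}}=-\frac{m_{2,i}}{m_{1,2}}$. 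The key simplification is that \eqref{aux} no longer involves $j$.

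Next I would prove \eqref{aux} by induction on $i\ge 3$, assuming throughout that $m_{1,2},\dots,m_{1,i}$ are nonzero (exactly the condition under which $T_M$ and the displayed fractions are defined). The base case $i=3$ is immediate: the sum is the single term $m_{2,3}/(m_{1,3}m_{1,2})$, and multiplying by $m_{1,3}$ gives $m_{2,3}/m_{1,2}$. For the inductive step I would peel off the $t=i$ summand,
\[
m_{1,i}\sum_{t=3}^{i}\frac{m_{t-1,t}}{m_{1,t}m_{1,t-1}}=\frac{m_{1,i}}{m_{1,i-1}}\left(m_{1,i-1}\sum_{t=3}^{i-1}\frac{m_{t-1,t}}{m_{1,t}m_{1,t-1}}\right)+\frac{m_{i-1,i}}{m_{1,i-1}},
\]
apply the induction hypothesis to the bracket (which equals $m_{2,i-1}/m_{1,2}$), and thereby reduce \eqref{aux} to $m_{1,i-1}m_{2,i}=m_{1,i}m_{2,i-1}+m_{1,2}m_{i-1,i}$. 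This is precisely the Ptolemy relation $E_{1,2,i-1,i}$ of Lemma~\ref{lm:ptolemy} (admissible since $1\le 2\le i-1\le i$), which closes the induction.

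I expect the main obstacle to be conceptual rather than computational: recognizing that the $j$-dependence factors out so that the whole statement reduces to the $j$-free identity \eqref{aux}, and then identifying the residual equation in the inductive step as exactly $E_{1,2,i-1,i}$ rather than some unrelated relation. The algebra of peeling off the last summand and clearing the denominator $m_{1,2}m_{1,i-1}$ I would carry out explicitly, since a sign or an index slip there is the easiest way to derail the argument. As consistency checks I would verify \eqref{aux} for small $i$ against Example~\ref{exM} (for instance $m_{2,4}=1$), and I would note that $\tfrac{-2m_{1,j}}{m_{1,t-1}}m_{t-1,t}$ is exactly the entry $t_{t,j}$ of $T_M$, so that the summation term in the proposition is $-m_{1,i}\sum_{t=3}^{i} t_{t,j}/m_{1,t}$; this links the formula back to the triangulated form of Proposition~\ref{Prop:Triangulated form} and gives an independent route to \eqref{aux} through the row operations there.
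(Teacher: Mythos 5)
Your proof is correct, but it takes a genuinely different route from the paper. The paper obtains the formula as a byproduct of its triangulation machinery: it invokes Lemma~\ref{Lemma:Mk} to equate two expressions for the entry $m_{i,j}^{k}$ of the auxiliary matrix $M_k$ (the closed form $-2\tfrac{m_{1,j}}{m_{1,i-1}}m_{i-1,i}$ and the partial-sum form $m_{i,j}^{2}-\sum_{t=3}^{i-1}\tfrac{m_{1,i}}{m_{1,t}}m_{t,j}^{t-1}$), then rewrites each $m_{t,j}^{t-1}$; the case $i=3$ is handled separately by a direct Ptolemy computation. You bypass Lemma~\ref{Lemma:Mk} entirely: from $E_{1,2,i,j}$ you get $m_{i,j}=(m_{1,i}m_{2,j}-m_{1,j}m_{2,i})/m_{1,2}$, so the proposition is equivalent to the $j$-free telescoping identity $m_{2,i}/m_{1,2}=m_{1,i}\sum_{t=3}^{i}m_{t-1,t}/(m_{1,t}m_{1,t-1})$, which you prove by induction on $i$ using only $E_{1,2,i-1,i}$. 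I checked the algebra: peeling off the $t=i$ summand and applying the induction hypothesis reduces the step to exactly $m_{1,i-1}m_{2,i}=m_{1,2}m_{i-1,i}+m_{1,i}m_{2,i-1}$, which is $E_{1,2,i-1,i}$; and all denominators $m_{1,t}$ are automatically nonzero by Definition~\ref{defi:frieze matrix}, so the extra nonvanishing hypothesis you flag is not needed. Your argument is shorter, self-contained (it needs only Lemma~\ref{lm:ptolemy}), and treats $i=3$ and $i\geq 4$ uniformly; what it gives up is the paper's explanation of \emph{where} the formula comes from, namely as the record of the row operations producing $T_M$. One small slip in your closing aside: since $t_{t,j}=-2m_{1,j}m_{t-1,t}/m_{1,t-1}$ for $t\geq 3$, the full summation term of the proposition (including its leading $-2$) equals $+m_{1,i}\sum_{t=3}^{i}t_{t,j}/m_{1,t}$, not its negative; this does not affect the main argument.
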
 \vspace{3mm}

\begin{proof}
We will treat the cases $i=3$ and $4 \leq i \leq n+1$ separately.\vspace{3mm}
	
If $i=3$ and $j \geq 3$ we have by Lemma~\ref{lm:ptolemy} that 
\[
m_{1,2}m_{3,j} = m_{1,3}m_{2,j} - m_{2,3}m_{1,j} = m_{1,3}m_{2,j} + m_{2,3}m_{1,j} - 2m_{2,3}m_{1,j} 
\]	 \vspace{-5mm}
	
Consider now $4 \leq i \leq n+1$ and fix $k \geq i$. Due to the proof of Lemma~\ref{Lemma:Mk} we know that \vspace{-3mm}
	
\[
m_{i,j}^{k} = -2\dfrac{m_{1,j}}{m_{1,i-1}}m_{i-1,i}
\]	\vspace{-3mm}
	
But by definition of $m_{i,j}^{k}$ this element is equal to $m_{i,j}^{2} - \sum\limits_{t=3}^{\min\{i-1,k\}}\dfrac{m_{1,i}}{m_{1,t}}m_{t,j}^{t-1}$. As \\ $\min\{i-1,k\} = i-1$ it turns out that \vspace{-3mm}
	
\[
m_{i,j} -\frac{m_{1,i}m_{2,j}}{m_{1,2}} -\frac{m_{2,i}m_{1,j}}{m_{1,2}} - \sum\limits_{t=3}^{i-1}\dfrac{m_{1,i}}{m_{1,t}}m_{t,j}^{t-1} = -2\dfrac{m_{1,j}}{m_{1,i-1}}m_{i-1,i}
\] \vspace{-1mm}
	
Writing $m_{t,j}^{t-1} = -2\dfrac{m_{1,j}}{m_{1,t-1}}m_{t-1,t}$ and $-2\dfrac{m_{1,j}}{m_{1,i-1}}m_{i-1,i} = -2\dfrac{m_{1,i}m_{1,j}}{m_{1,i}m_{1,i-1}}m_{i-1,i}$ the proof is completed.
\end{proof} \vspace{4mm}

A natural question that arises while studying the matrices $T_M$ is if they are frieze matrices; i.e. if they fulfill Equation (\ref{diam rule}). The reader may check in Example \ref{exM} that the entries of $T_M$ do not fulfill the generalized diamond rule. Despite of that, one can observe that a different rule holds: the determinant of any $2 \times 2$ matrix formed by neighbouring entries above the diagonal is equal to zero. The following proposition states this for every matrix $T_M$.

\begin{proposition}\label{Prop:Diamond zero}
Let $M$ be a frieze matrix and T$_{M}$ its triangulated form given in Proposition~\ref{Prop:Triangulated form}. Then 
\begin{enumerate}[a)]
\itemsep 0cm	
\item $t_{i,j}t_{i+1,j+1} - t_{i+1,j}t_{i,j+1} = 0 \hspace{3mm} \text{for all} \hspace{3mm} i \geq 2 \hspace{3mm} \text{and} \hspace{3mm} j \geq i+1$.
			
\item $t_{i,i}t_{i+1,i+1} + 2m_{i,i+1}t_{i,i+1} = 0 \hspace{3mm} \text{for all} \hspace{3mm} i \geq 2.$
\end{enumerate}
\end{proposition}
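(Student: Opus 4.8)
The plan is to extract a single structural feature of $T_M$ from which both identities fall out by inspection: on or above the diagonal, every row from the second onward is a scalar multiple of the same vector. Concretely, I would first record that for all $i \geq 2$ and all $j \geq i$ one has
\[
t_{i,j} = c_i\, m_{1,j}, \qquad \text{where } c_2 = 1 \text{ and } c_i = \frac{-2\,m_{i-1,i}}{m_{1,i-1}} \text{ for } i \geq 3.
\]
This is immediate from the piecewise definition of $T_M$: the second row is literally $t_{2,j} = m_{1,j}$, and for $i \geq 3$ the factor $m_{1,j}$ is the only $j$-dependent part of the defining expression, so it factors out and leaves the row constant $c_i$. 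I would also record the companion identity
\[
c_{i+1}\, m_{1,i} = -2\,m_{i,i+1} \qquad (i \geq 2),
\]
which is nothing more than the definition of $c_{i+1}$ (valid since $i+1 \geq 3$) after clearing the denominator $m_{1,i}$.

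For part a) the point is that the factorization exhibits rows $2, 3, \ldots$ of the upper-triangular part as scalar multiples of the single vector $(m_{1,j})_j$, so that every $2\times 2$ minor formed from two of these rows must vanish. I would first check that all four entries appearing in the minor lie in the region where the factorization is valid: for $i \geq 2$ and $j \geq i+1$ the entry closest to the diagonal is $t_{i+1,j}$, which sits in row $i+1$ at column $j \geq i+1$, hence on or above the diagonal, and the remaining three are even further above it. Substituting $t_{i,j} = c_i m_{1,j}$ into each entry then gives
\[
t_{i,j}t_{i+1,j+1} - t_{i+1,j}t_{i,j+1} = c_i c_{i+1}\, m_{1,j}m_{1,j+1} - c_{i+1} c_i\, m_{1,j}m_{1,j+1} = 0,
\]
with no case distinction needed, since $c_2 = 1$ already absorbs the second row.

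For part b) I would apply the same factorization to the three relevant entries, $t_{i,i} = c_i m_{1,i}$, $t_{i+1,i+1} = c_{i+1} m_{1,i+1}$, and $t_{i,i+1} = c_i m_{1,i+1}$, all of which lie on or above the diagonal. Collecting terms gives
\[
t_{i,i}t_{i+1,i+1} + 2\,m_{i,i+1}\,t_{i,i+1} = c_i\, m_{1,i+1}\bigl(c_{i+1}\,m_{1,i} + 2\,m_{i,i+1}\bigr),
\]
and the bracket vanishes by the companion identity $c_{i+1}m_{1,i} = -2m_{i,i+1}$. There is no genuine analytic obstacle here: once the factorization is in hand, both parts are one-line computations. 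The only points that require care, and which I would state explicitly, are the bookkeeping of the factorization's domain (that it covers the diagonal case $j = i$ needed for part b) and that the case $i = 2$ is handled uniformly by setting $c_2 = 1$) and the tacit assumption that the denominators $m_{1,i-1}$ are nonzero, which is precisely the well-definedness of $T_M$ already guaranteed by Proposition \ref{Prop:Triangulated form}.
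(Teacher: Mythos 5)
Your proof is correct and takes essentially the same route as the paper: both arguments substitute the explicit formula for $t_{i,j}$ from Proposition~\ref{Prop:Triangulated form} and cancel. Your observation that every row $i \geq 2$ of the upper-triangular part is proportional to $(m_{1,j})_j$ (with $c_2 = 1$) simply packages the paper's two cases $i=2$ and $i \geq 3$ into one uniform computation, which is a mild streamlining rather than a different idea.
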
 \vspace{3mm}

\begin{proof}
\begin{enumerate}[a)]
\item We will treat the cases $i=2$ and $i \geq 3$ separately. \vspace{3mm}
			
First, if $i=2$ and $j \geq 3$ then 
\[
t_{2,j}t_{3,j+1} - t_{3j}t_{2(j+1)} = m_{1,j}\left(-2\frac{m_{1,j+1}}{m_{1,2}}m_{2,3}\right) - \left(-2\frac{m_{1,j}}{m_{1,2}}m_{2,3}\right)m_{1,j+1} = 0
\]

Secondly, if $i \geq 3$ and $j \geq i+1$ we have that \vspace{-5mm}

\begin{align*}
\begin{split}
& t_{i,j}t_{i+1,j+1} -t_{i,j+1}t_{i+1,j} = \frac{-2m_{1,j}m_{i-1,i}}{m_{1,i-1}}\frac{-2m_{1,j+1}m_{i,i+1}}{m_{1,i}}-\frac{-2m_{1,j+1}m_{i-1,i}}{m_{1,i-1}}\frac{-2m_{1,j}m_{i,i+1}}{m_{1,i}}
\\  & =\frac{4m_{1,j}m_{i-1,i}m_{1,j+1}m_{i,i+1}}{m_{1,i-1}m_{1,i}} - \frac{4m_{1,j+1}m_{i-1,i}m_{1,j}m_{i,i+1}}{m_{1,i-1}m_{1,i}} = 0
\end{split}
\end{align*} 
			
\item Again we have to treat the cases $i=2$ and $i \geq 3$ separately.

If $i=2$ \vspace{-.8cm}

\begin{align*}
\begin{split}
t_{2,2}t_{3,3} + 2m_{2,3}t_{2,3} = m_{1,2}\frac{-2m_{1,3}m_{2,3}}{m_{1,2}} + 2m_{2,3}m_{1,3} = 0
\end{split}
\end{align*}

And if $3 \geq i \geq n$ \vspace{-.7cm}

\begin{align*}
\begin{split}
& t_{i,i}t_{i+1,i+1} + 2m_{i,i+1}t_{i,i+1} = \frac{-2m_{1,i}m_{i-1,i}}{m_{1,i-1}}\frac{-2m_{1,i+1}m_{i,i+1}}{m_{1,i}} + 2m_{i,i+1}\frac{-2m_{1,i+1}m_{i-1,i}}{m_{1,i-1}} = \\[0.4em]
& 4\frac{m_{i-1,i}m_{1,i+1}m_{i,i+1}}{m_{1,i-1}} -4\frac{m_{i,i+1}m_{1,i+1}m_{i-1,i}}{m_{1,i-1}} = 0
\end{split}
\end{align*}
\end{enumerate}
\end{proof}

\section{Appendix: Proofs}\label{SecProof}

The goal of this section is to prove Proposition \ref{Prop:Triangulated form}. In preparation, we first prove Lemma~\ref{lm:ptolemy} and then 
show an 
additional Lemma on auxiliary matrices $M_k$ which are needed for 
the proof of the proposition. 

\vspace{3mm}

\begin{slemma} [Lemma~\ref{lm:ptolemy}]
Let $M= (m_{i,j})$ be a frieze matrix in $frac(R)^{n \times n}$. Then \begin{equation}
m_{i,k}m_{j,l} = m_{i,j}m_{k,l} + m_{i,l}m_{j,k}  \tag{$E_{i,j,k,l}$} \label{Ptolemy}
\end{equation}
for every $1 \leq i \leq j \leq k \leq l \leq n$.
\end{slemma}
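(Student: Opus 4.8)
The plan is to realize every entry of $M$ as a $2\times 2$ minor, after which the identity \eqref{Ptolemy} becomes the classical three-term Plücker relation and holds for all quadruples at once. Before doing so I would dispose of the degenerate cases in which two indices coincide: if $i=j$, $j=k$ or $k=l$, then one factor on each side involves a diagonal entry $m_{t,t}=0$, and using the symmetry $m_{a,b}=m_{b,a}$ the claimed equation collapses to a tautology. Hence it suffices to treat the strict case $i<j<k<l$.

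The core construction is a family of vectors $w_{1},\dots,w_{n}\in frac(R)^{2}$ with $m_{i,j}=\det(w_{i}\mid w_{j})$ for all $i,j$. Writing $w_{p}=(a_{p},b_{p})$ and $[p,q]:=a_{p}b_{q}-a_{q}b_{p}$, I would set $w_{1}=(1,0)$ and $w_{2}=(0,x_{1})$, so that $[1,2]=x_{1}=m_{1,2}$. The frieze matrix condition forces every off-diagonal entry to be nonzero, in particular $x_{i}=m_{i,i+1}\neq 0$ for all $i$. This lets me define $w_{i+1}$ recursively from $w_{i-1},w_{i}$: since $[i-1,i]=x_{i-1}\neq 0$ the pair $w_{i-1},w_{i}$ is a basis, so writing $w_{i+1}=\alpha w_{i-1}+\beta w_{i}$ the requirements $[i,i+1]=x_{i}$ and $[i-1,i+1]=y_{i-1}$ force $\alpha=-x_{i}/x_{i-1}$ and $\beta=y_{i-1}/x_{i-1}$, both well defined. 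By construction $\det(w_{i}\mid w_{i+1})=x_{i}=m_{i,i+1}$ and $\det(w_{i}\mid w_{i+2})=y_{i}=m_{i,i+2}$, the minors agree with $M$ on the first two off-diagonal rows, and consecutive $w$'s remain linearly independent.

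It then remains to show $\det(w_{i}\mid w_{j})=m_{i,j}$ for all $j\ge i$, which I would prove by induction on the gap $j-i$, the cases $j-i\le 2$ being the construction above. For $j-i\ge 3$, the diamond rule $E_{i,i+1,j-1,j}$ applied to $M$ gives $m_{i,j}=\big(m_{i,j-1}m_{i+1,j}-x_{i}x_{j-1}\big)/m_{i+1,j-1}$, where the divisor $m_{i+1,j-1}$ is a nonzero off-diagonal entry. The minors satisfy the very same relation automatically, because the diamond rule is itself a Plücker relation for $2\times 2$ minors. Since the three entries on the right-hand side have smaller gap, the inductive hypothesis together with the nonvanishing of $m_{i+1,j-1}$ yields $\det(w_{i}\mid w_{j})=m_{i,j}$. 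With $m_{i,j}=\det(w_{i}\mid w_{j})$ established, the statement \eqref{Ptolemy} is exactly the Grassmann--Pl\"ucker identity $[i,k][j,l]=[i,j][k,l]+[i,l][j,k]$, a polynomial identity in the $a$'s and $b$'s valid for every quadruple of vectors, and in particular for $i\le j\le k\le l$.

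The step I expect to be the main obstacle is the propagation in the induction: I must guarantee that the diamond rule determines each further-from-diagonal entry from nearer ones by a genuine division, and this is precisely where the hypothesis that all off-diagonal entries of a frieze matrix are nonzero is indispensable. An alternative, more computational route avoids the minor realization and proves $E_{i,j,k,l}$ directly by a double induction on the outer gaps $j-i$ and $l-k$, the diamond rule furnishing the base case $j-i=l-k=1$ with arbitrary inner gap $k-j$, and the symmetry $(i,j,k,l)\mapsto(l,k,j,i)$ reducing the work to widening a single gap; in that approach the obstacle shifts to controlling the cancellations in the inductive step.
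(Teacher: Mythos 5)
Your argument is correct, but it follows a genuinely different route from the paper. The paper proves $E_{i,j,k,l}$ by induction on $d=l-i$ entirely inside the matrix $M$: the base case $l-i=3$ is checked from the diamond rule, and the inductive step expands $m_{i,l}$ via the diamond rule and then invokes the inductive hypothesis four times ($E_{i,j,k,l-1}$, $E_{i+1,k,l-1,l}$, $E_{i,i+1,j,k}$, $E_{i+1,j,l-1,l}$) to reassemble $m_{i,k}m_{j,l}$ through a chain of cancellations. You instead construct vectors $w_1,\dots,w_n\in frac(R)^2$ with $\det(w_i\mid w_j)=m_{i,j}$, after which all Ptolemy relations follow at once from the three-term Grassmann--Pl\"ucker identity; your only induction is the much lighter one showing that the minors $[i,j]$ reproduce $m_{i,j}$, and there the recursion $E_{i,i+1,j-1,j}$ involves a single division by the nonzero entry $m_{i+1,j-1}$, exactly the point you flag. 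Your construction and the base data $w_1=(1,0)$, $w_2=(0,x_1)$, $\alpha=-x_i/x_{i-1}$, $\beta=y_{i-1}/x_{i-1}$ check out, and the degenerate cases are disposed of correctly by symmetry. What your approach buys is a structural statement stronger than the lemma: every frieze matrix in the sense of Definition~\ref{defi:frieze matrix} is a matrix of $2\times 2$ minors of a $2\times n$ matrix, i.e.\ is of the Baur--Marsh form of Corollary~\ref{Cor: Theorem Baur-Marsh}. This reverses the logical flow of the paper, which cites \cite[Theorem 2.1]{BM} to deduce that such minor matrices are frieze matrices and then specializes Theorem~\ref{Theo: Determinant} to them; in your setup the lemma and that corollary become two faces of the same realization. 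What the paper's computation buys in exchange is self-containedness: it never leaves the entries of $M$ and needs no auxiliary construction, at the cost of a longer cancellation argument.
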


To clarify the following arguments, whenever an equality holds because of an equation $E_{i,j,k,l}$ we will indicate this by writing the tag with the corresponding indices on the right.

\begin{proof}[Proof of Lemma~\ref{lm:ptolemy}]
Firstly, if one of the
inequalities between the indices $i; j; k; l$ in
Lemma~\ref{lm:ptolemy} 
is an equality, 
then the equation (\ref{Ptolemy}) is trivial. \vspace{1mm}
	
Suppose now that $i<j<k<l$. We will prove the assertion by induction on $d=l-i$, the distance between the first and last subscript. The minimum non-trivial distance for $i$ and $l$ is $l-i = 3$, which implies that $j=i+1$ and $k=i+2$. Therefore the right hand side is \vspace{-5mm}

\begin{equation*}
m_{i,i+1}m_{i+2,i+3} + m_{i,i+3}m_{i+1,i+2}  = x_{i}x_{i+2} + m_{i,i+3}x_{i+1}  
\end{equation*} by Equation \ref{diam rule} this last element is equal to
\[ x_{i}x_{i+2} + \left(\dfrac{y_{i}y_{i+1}-x_{i}x_{i+2}}{x_{i+1}}\right)x_{i+1} = y_{i}y_{i+1} = m_{i,i+2}m_{i+1,i+3} = m_{i,k}m_{j,l} \]

Now, assume that $E_{i^{\prime},j^{\prime},k^{\prime},l^{\prime}}$ holds for all $ i^{\prime} < j^{\prime} < k^{\prime} < l^{\prime}$ with $l^{\prime}-i^{\prime} \leq d$. Consider $i<j<k<l$ with $l-i = d+1$. Then, since $l-i \geq 3$, by the generalized diamond rule we have that  \vspace{-5mm}
	
\begin{align}
& \hspace{-7mm} m_{i,j}m_{k,l} + m_{i,l}m_{j,k} = m_{i,j}m_{k,l} + \left(\dfrac{m_{i,l-1}m_{i+1,l}-m_{i,i+1}m_{l-1,l}}{m_{i+1,l-1}}\right)m_{j,k} \notag \\[0.5em] 
& =m_{i,j}m_{k,l} +  \dfrac{(m_{i,l-1}m_{j,k})m_{i+1,l}-m_{j,k}m_{i,i+1}m_{l-1,l}}{m_{i+1,l-1}} \notag \\[0.5em]
& =\dfrac{m_{i,j}m_{k,l}m_{i+1,l-1}+ (m_{i,l-1}m_{j,k})m_{i+1,l}-m_{j,k}m_{i,i+1}m_{l-1,l}}{m_{i+1,l-1}} \notag
\end{align}
As $E_{i,j,k,l-1}$ holds, this last term is equal to 

\vspace{-5.8mm}\begin{align}
&\dfrac{m_{i,j}m_{k,l}m_{i+1,l-1}+\left(m_{i,k}m_{j,l-1}-m_{i,j}m_{k,l-1}\right)m_{i+1,l}-m_{j,k}m_{i,i+1}m_{l-1,l}}{m_{i+1,l-1}} \hspace{1cm} [E_{i,j,k,l-1}] \notag \\[0.5em] 
& = \dfrac{m_{i,j}(m_{k,l}m_{i+1,l-1}-m_{k,l-1}m_{i+1,l})+m_{i,k}m_{j,l-1}m_{i+1,l}-m_{j,k}m_{i,i+1}m_{l-1,l}}{m_{i+1,l-1}} \notag \\[0.5em]
& = \dfrac{m_{i,j}m_{i+1,k}m_{l-1,l}+m_{i,k}m_{j,l-1}m_{i+1,l} - m_{j,k}m_{i,i+1}m_{l-1,l} }{m_{i+1,l-1}} \hspace{1cm} [E_{i+1,k,l-1,l}] \notag \\[0.5em]
& = \dfrac{(m_{i,j}m_{i+1,k}- m_{j,k}m_{i,i+1})m_{l-1,l}+m_{i,k}m_{j,l-1}m_{i+1,l}}{m_{i+1,l-1}} \notag \\[0.5em]
& = \dfrac{m_{i,k}m_{i+1,j}m_{l-1,l}+m_{i,k}m_{j,l-1}m_{i+1,l}}{m_{i+1,l-1}}  \hspace{1cm} [E_{i,i+1,j,k}] \notag \\[0.5em]
&= \dfrac{m_{i,k}(m_{i+1,j}m_{l-1,l}+m_{j,l-1}m_{i+1,l})}{m_{i+1,l-1}}   =\dfrac{m_{i,k}m_{j,l}m_{i+1,l-1}}{m_{i+1,l-1}} =
m_{i,k}m_{j,l} \hspace{1cm} [E_{i+1,j,l-1,l}] \notag
\end{align} 
\vspace{-1cm}

\end{proof}

\vspace{5mm} Before proving Proposition \ref{Prop:Triangulated form} we introduce auxiliary matrices $M_{0}, M_1, \ldots, M_{n-1}$, being $M_{n-1}$ the desired upper triangular matrix $T_{M}$, as follows. The matrix $M_{0}$ is obtained by swapping the first row of $M$ with its second row, $M_{1}$ is the result of applying the sequence of row operations $``R_{i} - \frac{m_{1,i}}{m_{1,2}}R_{1} \rightarrow R_{i}"$ (for $3 \leq i \leq n$) to $M_{0}$, and $M_{2}$ results from applying the sequence of row operations $``R_{i} - \frac{m_{2,i}}{m_{1,2}}R_{2} \rightarrow R_{i}"$ (for $3 \leq i \leq n$) to $M_{1}$. From there on, the matrix $M_{k}$ is obtained by applying the sequence of row operations $``R_{i} - \frac{m_{1,i}}{m_{1,k}}R_{k} \rightarrow R_{i}"$ (for $k+1 \leq i \leq n$) to the matrix $M_{k-1}$.

For an explicit calculation, let us denote by $m_{i,j}^{k}$ the $ij$-entry of $M_{k}$ (observe that the super index is not a power). We define $m_{i,j}^{k}$ as:

\begin{align*}
m_{i,j}^{0} & = \left\{
\begin{array}{l c l}
m_{2,j} & & \text{if} \ i=1 \\[0.2em]
m_{1,j} & & \text{if} \ i=2 \\[0.2em]
m_{i,j} & & \text{if} \ i \geq 3 
\end{array}\right.  \\[2em]
m_{i,j}^{1} & = \left\{
\begin{array}{l c l}
m_{i,j}^{0} & & \text{if} \ i=1,2 \\[0.5em]
m_{i,j} - \frac{m_{1,i}}{m_{1,2}}m_{2,j} & & \text{if} \ i \geq 3 \end{array}\right. \\[2em]
m_{i,j}^{2} & = \left\{
\begin{array}{l c l}
m_{i,j}^{0} & & \text{if} \ i=1,2 \\[0.5em]
m_{i,j} -  \frac{m_{1,i}}{m_{1,2}}m_{2,j} - \frac{m_{2,i}}{m_{12}}m_{1,j}& & \text{if} \ i \geq 3 
\end{array}\right.
\end{align*} 

And inductively for $k\geq 3$ \vspace{4mm}

\begin{equation}
m_{i,j}^{k} = \left \{\begin{array}{l c l}
m_{i,j}^{k-1} & & \text{if} \ 1\leq i \leq k \\[1em]
m_{i,j}^{k-1} - \frac{m_{1,i}}{m_{1,k}}m_{k,j}^{k-1}& & \text{if} \ k+1 \leq i \leq n
\end{array}\right.
\end{equation}
\vspace{4mm}

Before moving forward with the proof of  Proposition \ref{Prop:Triangulated form} we give several useful observations. \vspace{2mm}

\begin{rem} 
Observe that for $j=1,2$ and $i > j$ the entries  $m_{ij}^{2}$ are all zero. Besides, is not hard to prove by induction on $k$ that an equivalent definition for $m_{i,j}^{k}$, with $k \geq 3$ is 
\end{rem} 

\begin{equation}\label{sum}
m_{i,j}^{k} = \left \{\begin{array}{l l}	m_{i,j}^{2} &  \text{if} \  i \in \{1, 2 ,3\} \\[0.3em]
m_{i,j}^{2} - \sum\limits_{t=3}^{\min\{i-1,k\}}\frac{m_{1,i}}{m_{1,t}}m_{t,j}^{t-1}& \text{if} \ 4 \leq i \leq n
\end{array}\right.
\end{equation} \vspace{2mm}

\begin{lemma}\label{Lemma:Mk} 
For all $k \geq 3$ the entries of $M_{k}$ have the following form:
		
\[
m_{i,j}^{k} = \left \{\begin{array}{l c l} (i) \hspace{3mm}	m_{i,j}^{2} & & \text{if} \  i = 1,2 \\[0.6em]
(ii) \hspace{3mm} 0 & & \text{if} \ i \geq 3 \ \land \ j \leq \min\{i-1, k\}  \\[0.4em]
(iii) \hspace{3mm} \dfrac{-2 m_{1,j}}{m_{1,i-1}}m_{i-1,i} & & \text{if} \ 3 \leq i \leq k+1 \ \land \ j \geq i \\[1.2em]
(iv) \hspace{3mm} m_{i,j}^{2} - \sum\limits_{t=3}^{k}\frac{m_{1,i}}{m_{1,t}}m_{t,j}^{t-1}& & \text{if} \  i \geq k+2 \ \land \ j \geq k+1
\end{array}\right.
\] \vspace{3mm}
\end{lemma}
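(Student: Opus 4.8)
The plan is to prove the four cases simultaneously by induction on $k$, with base case $k=3$, using the recursive definition together with its equivalent closed form Equation~(\ref{sum}), the Ptolemy relation of Lemma~\ref{lm:ptolemy}, and the symmetry $m_{i,j}=m_{j,i}$ of $M$. The computational engine will be the auxiliary identity
\[
m_{i,j}=\frac{m_{1,i}m_{2,j}}{m_{1,2}}+\frac{m_{2,i}m_{1,j}}{m_{1,2}}-2m_{1,i}m_{1,j}\sum_{t=3}^{i}\frac{m_{t-1,t}}{m_{1,t}m_{1,t-1}}\qquad(j\ge i),
\]
which I would prove first, independently, by a sub-induction on $i$: the relation $E_{1,i-1,i,j}$ rewrites $m_{i,j}$ in terms of $m_{i-1,j}$, and $E_{1,2,i-1,i}$ reconciles the coefficients with the inductive form of $m_{i-1,j}$. (This identity is the statement of Proposition~\ref{Cor:Diagonals}, but since that proposition is deduced from the present lemma it must be established here from scratch, using only Lemma~\ref{lm:ptolemy}.) For the base case $k=3$ I would substitute the explicit formulas for $m_{i,j}^{2}$ and read each identity off a single Ptolemy instance; for example $m_{3,j}^{2}=\frac{-2m_{1,j}}{m_{1,2}}m_{2,3}$ is exactly $E_{1,2,3,j}$ after clearing $m_{1,2}$, and the vanishing of $m_{i,j}^{3}$ for $j\le 2$ is the Remark preceding Equation~(\ref{sum}).

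For the inductive step I would first dispatch the entries that no row operation touches. Because $m_{i,j}^{k}=m_{i,j}^{k-1}$ for $i\le k$, case (i), the range $3\le i\le k$ of (iii), and the range $i\le k$ of (ii) follow at once from the induction hypothesis, using $\min\{i-1,k\}=\min\{i-1,k-1\}=i-1$ for $i\le k$. Case (iv), with $i\ge k+2$, is bookkeeping: the recursion subtracts exactly $\frac{m_{1,i}}{m_{1,k}}m_{k,j}^{k-1}$, which is the $t=k$ term absent from the sum in (iv$'$), so its upper limit advances from $k-1$ to $k$. The range $i\ge k+1$, $j\le k-1$ of (ii) is also immediate, since both $m_{i,j}^{k-1}$ and $m_{k,j}^{k-1}$ vanish by the induction hypothesis.

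The genuine content is confined to the columns $j\ge k$ with $i\ge k+1$, where the $k$-th pivot acts. Here I would substitute $m_{k,j}^{k-1}=\frac{-2m_{1,j}}{m_{1,k-1}}m_{k-1,k}$ from part (iii) of the hypothesis, making the subtracted row a scalar multiple of $m_{1,j}$, and expand $m_{i,j}^{k-1}$ via Equation~(\ref{sum}); after substituting part (iii) for every $m_{t,j}^{t-1}$ occurring in the resulting sum, $m_{i,j}^{k}$ is expressed as $m_{i,j}$ minus the right-hand side of the auxiliary identity with its sum truncated at $t=k$. When $j=k$ this truncated sum is already the complete sum for the row index $k$, so by symmetry the expression equals $m_{i,k}-m_{k,i}=0$, proving (ii); when $j\ge k+1$ and $i=k+1$ the truncated sum misses only the $t=k+1$ summand, and the full identity then leaves exactly the residual term $\frac{-2m_{1,j}}{m_{1,k}}m_{k,k+1}$, proving (iii) for $i=k+1$.

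I expect the main obstacle to be the sub-induction establishing the auxiliary identity, namely the telescoping of $\sum_{t=3}^{i}\frac{m_{t-1,t}}{m_{1,t}m_{1,t-1}}$ against the two Ptolemy relations so that the coefficients match at each step. The second delicate point is purely organizational: one must track the upper limit $\min\{i-1,k\}$ carefully through the recursion, since it is precisely the presence or absence of the $t=k+1$ summand that distinguishes the vanishing entry (case (ii), $j=k$) from the reduced diagonal entry (case (iii), $j\ge k+1$) at the boundary.
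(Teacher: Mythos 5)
Your proposal is correct, and its outer scaffolding (induction on $k$, the same four cases, dispatching the untouched rows $i\le k$ and the bookkeeping case (iv) via Equation~(\ref{sum})) matches the paper's. The core of the argument, however, is handled by a genuinely different mechanism. The paper expands both $m_{i,j}^{k-1}$ and $m_{k,j}^{k-1}$ via Equation~(\ref{sum}), observes that the two truncated sums cancel identically in $m_{i,j}^{k}=m_{i,j}^{k-1}-\frac{m_{1,i}}{m_{1,k}}m_{k,j}^{k-1}$, and then kills the remaining expression in the $m^{2}$-entries with one or two explicit Ptolemy relations chosen case by case ($E_{1,2,k,i}$ for the vanishing entry, $E_{1,k,k+1,j}$ together with $E_{1,2,k,k+1}$ for the new row of case (iii)). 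You instead front-load the work into the closed-form identity for $m_{i,j}$ --- which is Proposition~\ref{Cor:Diagonals}, deduced in the paper \emph{from} this lemma, so your insistence on an independent sub-induction is exactly right to avoid circularity, and that sub-induction does go through: $E_{1,i-1,i,j}$ produces the $t=i$ summand and the leftover coefficient mismatch is precisely $E_{1,2,i-1,i}$. Once the identity is in hand, your derivation of (ii) as $m_{i,k}-m_{k,i}=0$ by symmetry, and of (iii) for $i=k+1$ as the single missing $t=k+1$ summand, is clean and correct, and it makes transparent \emph{why} Proposition~\ref{Cor:Diagonals} holds rather than leaving it as an afterthought. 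The trade-off: the paper avoids any auxiliary identity at the price of repeating ad hoc Ptolemy manipulations in each case, while you isolate all Ptolemy work in one telescoping lemma and run the main induction as pure bookkeeping. One small inaccuracy: in the base case $k=3$ the entry $m_{4,j}^{3}$ cannot be read off a single Ptolemy instance by direct expansion (the paper needs both $E_{1,2,3,j}$ and $E_{1,2,3,4}$), but this is harmless since your general inductive-step machinery already covers $k=3$ once $m_{3,j}^{2}=\tfrac{-2m_{1,j}}{m_{1,2}}m_{2,3}$, i.e.\ $E_{1,2,3,j}$, is established.
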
	

\begin{proof}	
We prove this by induction on $k$: \vspace{5mm}
		
Fix $k=3$ and lets $m_{i,j}^{3}$ denote the $ij$-entry in the matrix $M_{3}$.

\begin{enumerate}[(i)]
\itemsep3mm

\item If $i=1,2$ then $m_{i,j}^{3} = m_{i,j}^{2}$ by definition.

\item  Let $i \geq 3$ and $j \leq \min\{i-1, 3\}$.			
If $i=3$ then $j \leq 2$ and $m_{3,j}^{3} = m_{3,j}^{2} = 0$ by the form of $M_{2}$.			
If $i \geq 4$ then $j \leq 3$ and $m_{i,j}^{3} = m_{i,j}^{2} - \frac{m_{1,i}}{m_{1,3}}m_{3,j}^{2}$. If $j=1,2$ this last element is zero due to the form of $M_{2}$. If $j=3$ we have			
\begin{align}
m_{i,3}^{3} 
&= m_{i,3}^{2} - \frac{m_{1,i}}{m_{1,3}}m_{3,3}^{2} \notag \\
& =  m_{i,3}-\frac{m_{1,i}}{m_{1,2}}m_{2,3}-\frac{m_{2,i}}{m_{1,2}}m_{1,3} - \frac{m_{1,i}}{m_{1,3}}  \left( m_{3,3} - \frac{m_{1,3}}{m_{1,2}}m_{2,3} - \frac{m_{2,3}}{m_{1,2}}m_{1,3}\right)  \tag*{(\ref{sum})}\\
& =  m_{i,3} + \frac{m_{1,i}}{m_{1,2}}m_{2,3} - \frac{m_{2,i}}{m_{1,2}}m_{1,3} \notag \\ 
& =  \frac{(m_{i,3}m_{1,2} + m_{1,i}m_{2,3}) - m_{2,i}m_{1,3} }{m_{1,2}} \notag\\ 
&= \frac{m_{2,i}m_{1,3} - m_{2,i}m_{1,3} }{m_{1,2}}=0 \tag*{$[E_{1,2,3,i}]$}
\end{align}	

\item Let $3 \leq i \leq 4$ and $j \geq i$. If $i=3$ then $j \geq 3$ and

\begin{align}
m_{3,j}^{3}&= m_{3,j}-\frac{m_{1,3}}{m_{1,2}}m_{2,j}-\frac{m_{2,3}}{m_{1,2}}m_{1,j} =\frac{m_{3,j}m_{1,2} -(m_{1,3}m_{2,j} +m_{2,3}m_{1,j})}{m_{1,2}} \tag{\ref{sum}} \\ 
& = \frac{-2m_{1,j}}{m_{1,2}}m_{2,3} \tag*{$[E_{1,2,3,j}]$}
\end{align}
	
If $i=4$ then $j\geq 4$ and 

\begin{align}
& m_{4,j}^{3} = m_{4,j}^{2} - \frac{m_{1,4}}{m_{1,3}}m_{3,j}^{2} \notag \\
& =  m_{4,j} - \frac{m_{1,4}}{m_{1,2}}m_{2,j} - \frac{m_{2,4}}{m_{1,2}}m_{1,j} - \frac{m_{1,4}}{m_{1,3}}\left( m_{3,j} - \frac{m_{1,3}}{m_{1,2}}m_{2,j} - \frac{m_{2,3}}{m_{1,2}}m_{1,j}\right) \tag{\ref{sum}} \\
&=  m_{4,j} - \frac{m_{2,4}}{m_{1,2}}m_{1,j} - \frac{m_{1,4}m_{3,j}}{m_{1,3}} + \frac{m_{1,4}m_{2,3}}{m_{1,3}m_{1,2}}m_{1,j} \notag \\
&= \frac{m_{4,j}m_{1,3} - m_{1,4}m_{3,j}}{m_{1,3}} + \left( \frac{m_{1,4}m_{2,3} - m_{2,4}m_{1,3}}{m_{1,3}m_{1,2}}\right)m_{1,j} \tag*{$[E_{1,2,3,j} ,E_{1,2,3,4}]$}\\ 
&= -\frac{m_{1,j}m_{3,4}}{m_{1,3}} -\frac{m_{1,2}m_{3,4}}{m_{1,3}m_{1,2}}m_{1,j} = \frac{-2m_{1,j}}{m_{1,3}}m_{3,4} \notag
\end{align}
 \vspace{0mm}		
\item If $i \geq 5$ and $j \geq 4$ we have that $m_{i,j}^{3} = m_{i,j}^{2} - \frac{m_{1,i}}{m_{1,3}}m_{3,j}^{2}$. And this completes the proof for case $k=3$. \vspace{3mm}
\end{enumerate}

Suppose now that the claim is true up to $k \geq 3$. \vspace{2mm}
	
\begin{enumerate}[(i)]
\itemsep 0.5cm
\item If $i=1,2$ then $m_{i,j}^{k+1} = m_{i,j}^{k} = m_{i,j}^{2}$ by the induction hypothesis. 
\item Let $j \leq \min\{i-1, k+1\}$ and $i \geq 3$. 
If $j \leq \min\{i-1, k\}$ then 

$m_{i,j}^{k+1} = \left\{\begin{array}{l c l}
m_{i,j}^{k} & & \text{if} \ 3\leq i \leq k+1\\
m_{i,j}^{k} - \frac{m_{1,i}}{m_{1,k+1}}m_{k+1,j}^{k} & & \text{if} \ i \geq k+2
\end{array}\right. = 0$ \vspace{3mm} 

\hspace{-7mm} by the induction hypothesis. Let $j=k+1$ and $i \geq k+2$. Then 

\begin{align}
m_{i,k+1}^{k+1} &= 
m_{i,k+1}^{2} - \sum\limits_{t=3}^{k}\frac{m_{1,i}}{m_{1,t}}m_{t,k+1}^{t-1} -\frac{m_{1,i}}{m_{1,k+1}}\left( m_{k+1,k+1}^{2} - \sum\limits_{t=3}^{k}\frac{m_{1,k+1}}{m_{1,t}}m{t,k+1}^{t-1}\right) \tag{\ref{sum}} \\[0.5em]
& = m_{i,k+1}^{2} - \frac{m_{1,i}}{m_{1,k+1}}m_{k+1,k+1}^{2} \notag \\[0.5em]
& =  m_{i,k+1} - \frac{m_{1,i}}{m_{1,2}}m_{2,k+1} - \frac{m_{2,i}}{m_{1,2}}m_{1,k+1} - \frac{m_{1,i}}{m_{1,k+1}}\left(m_{k+1,k+1} - \frac{2m_{1,k+1}}{m_{1,2}}m_{2,k+1}\right) \tag{\ref{sum}}\\[0.5em]
& = m_{i,k+1} + \frac{m_{1,i}}{m_{1,2}}m_{2,k+1}- \frac{m_{2,i}}{m_{1,2}}m_{1,k+1}= \frac{m_{i,k+1}m_{1,2} + m_{1,i}m_{2,k+1} - m_{2,i}m_{1,k+1}}{m_{1,2}}= 0 \notag
\end{align}

\item Let $3 \leq i \leq k+2$ and $j \geq i$. 
If $3 \leq i \leq k+1$ then $m_{i,j}^{k+1} = m_{i,j}^{k} = \dfrac{-2 m_{1,j}}{m_{1,i-1}}m_{i-1,i}$.
If $i=k+2$ then $j \geq k+2$ and \begin{align}
& m_{k+2,j}^{k+1} = m_{k+2,j}^{k} - \frac{m_{1,k+2}}{m_{1,k+1}}m_{k+1,j}^{k} \notag \\ 
&= m_{k+2,j}^{2} - \sum\limits_{t=3}^{k}\frac{m_{1,k+2}}{m_{1,t}}m_{t,j}^{t-1} -  \frac{m_{1,k+2}}{m_{1,k+1}}\left(m_{k+1,j}^{2} - \sum\limits_{t=3}^{k}\frac{m_{1,k+2}}{m_{1,t}}m_{t,j}^{t-1} \right) \tag{\ref{sum}} \\
&= m_{k+2,j}^{2} - \frac{m_{1,k+2}}{m_{1,k+1}}m_{k+1,j}^{2} \notag \\
& = m_{k+2,j} - \frac{m_{1,k+2}}{m_{1,2}}m_{2,j} -\frac{m_{2,k+2}}{m_{1,2}}m_{1,j} - \frac{m_{1,k+2}}{m_{1,k+1}}\left(m_{k+1,j} - \frac{m_{1,k+1}}{m_{1,2}}m_{2,j}-\frac{m_{2,k+1}}{m_{1,2}}m_{1,j}\right) \tag{\ref{sum}}\\
& = m_{k+2,j}-\frac{m_{2,k+2}}{m_{1,2}}m_{1,j} - \frac{m_{1,k+2}m_{k+1,j}}{m_{1,k+1}} + \frac{m_{1,k+2}m_{2,k+1}}{m_{1,k+1}m_{1,2}}m_{1,j} \notag \\[0.5em]
& = \frac{m_{k+2,j}m_{1,k+1} - m_{1,k+2}m_{k+1,j}}{m_{1,k+1}} + \frac{m_{1,k+2}m_{2,k+1}-m_{2,k+2}m_{1,k+1}}{m_{1,k+1}m_{1,2}}m_{1,j} \notag \\[0.5em]
& = \frac{-m_{1,j}m_{k+1,k+2}}{m_{1,k+1}} + \frac{-m_{1,2}m_{k+1,k+2}}{m_{1,k+1}m_{1,2}}m_{1,j} \hspace{3.5cm} [E_{1,k+1,k+2,j} \ \text{and} \ E_{1,2,k+1,k+2}] \notag \\[0.5em]
& = \frac{-2m_{1,j}}{m_{1,k+1}}m_{k+1,k+2} \notag
\end{align}

\item If $i\geq k+3$ and $j \geq k+2$ then $$m_{i,j}^{k+1} = m_{i,j}^{k} -  \frac{m_{1,i}}{m_{1,k+1}}m_{k+1,j}^{k} = m_{i,j}^{2} -  \sum\limits_{t=3}^{k}\frac{m_{1,i}}{m_{1,t}}m_{t,j}^{t-1} -  \frac{m_{1,i}}{m_{1,k+1}}m_{k+1,j}^{k} = m_{i,j}^{2} - \sum\limits_{t=3}^{k+1}\frac{m_{1,i}}{m_{1,t}}m_{t,j}^{t-1}$$ 
\end{enumerate}
and this completes the induction
\end{proof} \vspace{2mm}
	
\begin{proof}[Proof of Proposition~\ref{Prop:Triangulated form}]
The claim of the form of $T_M$ then follows from Lemma~\ref{Lemma:Mk}: observe that case $(iv)$ in Lemma~\ref{Lemma:Mk} disappear for $k=n-1$ because $i$ can not be greater than $n+1$, so we are left with the first three cases, which are those of Proposition~\ref{Prop:Triangulated form} when we replace $k$ by $n-1$. 

For the second assertion, we observe from its definition that $M_0$ is obtained from $M$ by swapping its first two rows. In the other hand, if $1 \leq k \leq n-1$, $M_k$ is obtained by a sequence of row operations that do not alter the determinant, so we have that $det(M) = -det(M_0) = -det(M_k)$ for all $k \in \{1, \ldots, n-1\}$. In particular $det(M) = -det(M_{n-1}) = -det(T_M)$
\end{proof}	\vspace{3mm}

\section*{Acknowledgement} The author acknowledges the University of Leeds for its warm welcome. He would like to thank Karin Baur and Ana García Elsener for their helpful suggestions, as well as the helpful comments of the referees.

\printbibliography

\end{document}